\theoremstyle{plain}
\newtheorem{theorem}{Theorem}
\newtheorem{proposition}[theorem]{Proposition}
\newtheorem{lemma}[theorem]{Lemma}
\newtheorem{corollary}[theorem]{Corollary}
\theoremstyle{definition}
\newtheorem{remark}[theorem]{Remark}
\title[{Triple-crossing number, the genus of a knot or link and torus knots}]{Triple-crossing number, the genus of a knot\\ or link and torus knots}
\author{Micha{\l} Jab{\l}onowski}
\address{Institute of Mathematics, Faculty of Mathematics, Physics and Informatics,\\ University of Gda\'nsk, 80-308 Gda\'nsk, Poland}
\keywords{triple-crossing diagram, minimal triple-crossing number, the genus of the link, torus knots}
\subjclass[2020]{57K10 (primary)} 
\email{michal.jablonowski@gmail.com}
\begin{document}

\maketitle

\begin{abstract}
We show that the triple-crossing number of any knot is greater or equal to twice its (canonical) genus and we show an even stronger bound in the case of links. As an application we show that this bound is strong enough to obtain the triple-crossing numbers of all torus knots, and of many more knots and their connected sums.
\end{abstract}

\section{Introduction}

In general position of planar diagrams of knots and links, two strands meet at every crossing. It is known since \cite{Ada13} that any knot and any link has a diagram where, at each of its multiple points in the plane, exactly three strands are allowed to cross (pairwise transversely). Such triple-point diagrams have been studied in several recent papers, such as \cite{Ada13, ACFIPVWZ14, AHP19, JabTro20, Nis19}.
\par
The triple-crossing number of a knot or a link $L$, denoted $c_3(L)$, is defined analogously to the classical (double-crossing) number as the least number of triple-crossings for any triple-crossing diagram of $L$. There are lower bounds for the triple-crossing number: in terms of double-crossing number $c_3(L)\geq \frac{1}{3}c_2(L)$, and if $L$ is an alternating link then $c_3(L)\geq \frac{1}{2}c_2(L)$ (see \cite{Ada13}); in terms of double-crossing braid index $c_3(L)\geq\beta_2(L)-1$ (where $L$ is an nonsplit link, see \cite{Nis19}).
\par
There is the known inequality between the genus $g$ of a link $L$, the canonical genus $g_c$ and its double-crossing number, namely $c_2(L)\geq 2\cdot g_c(L)\geq 2\cdot g(L)$. Combining the above-mentioned inequalities, one sees that $c_3(L)\geq \frac{2}{3}\cdot g_c(L)$. We will improve this lower bound as follows (where $r(L)$ is the number of components of the link $L$).

\begin{theorem}\label{t1}
	Let $L$ be a knot or a link. Then $c_3(L)\geq 2\cdot g_c(L)+r(L)-1.$
\end{theorem}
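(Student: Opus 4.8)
The plan is to produce, from a \emph{minimal} triple-crossing diagram $D$ of $L$ (so $D$ has $t=c_3(L)$ triple points), a canonical Seifert surface $F$ for $L$ with $b_1(F)\le t$. I will focus on the non-split case, so that $D$ is connected and $F$ will be connected; this is the essential case, consistent with the setting of the companion bounds cited above. Granting such a surface, the payoff is immediate: a canonical surface always satisfies $g(F)\ge g_c(L)$, and a connected spanning surface with $r=r(L)$ boundary circles satisfies $b_1(F)=2g(F)+r-1$, so
\[
2g_c(L)+r-1\;\le\;2g(F)+r-1\;=\;b_1(F)\;\le\;t\;=\;c_3(L),
\]
which is exactly the assertion. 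Thus everything reduces to building $F$ and bounding its first Betti number; note that the term $r-1$ is carried automatically by the identity $b_1(F)=2g(F)+r-1$, so no separate bookkeeping of components is needed.

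To build $F$ I apply the oriented (Seifert) smoothing at every triple point. Each triple point has six incident arc-ends, three incoming and three outgoing, and an orientation-preserving planar smoothing joins them by non-crossing incoming--outgoing arcs; fixing such a choice at every triple point turns $D$ into a disjoint family of $s$ Seifert circles. Capping these by disks and reinserting at each triple point the pair of twisted bands that records the top/middle/bottom order of the three strands yields an orientable surface $F$ with $\partial F=L$. The same surface is obtained by running the classical Seifert algorithm on the double-crossing diagram gotten by perturbing each triple point into three ordinary crossings, so $F$ is genuinely canonical and $g(F)\ge g_c(L)$ as required. A direct cell count gives $\chi(F)=s-2t$, hence $b_1(F)=2t-s+1$, and the target inequality $b_1(F)\le t$ is precisely
\[
s\;\ge\;t+1 .
\]

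The crux is therefore the combinatorial statement that the smoothings can be arranged so that at least $t+1$ Seifert circles appear, and this is where I expect the real work to lie. One cannot be cavalier: already the one-triple-point diagram of the unknot admits a smoothing with a single Seifert circle (giving $b_1=2>t$), whereas a different smoothing of that same diagram produces three circles and a disk. The freedom that must be exploited is the choice among the several oriented smoothings at each triple point -- equivalently, the two ways of perturbing it into double crossings, interchanged by a Reidemeister~III move. I would establish $s\ge t+1$ by induction on $t$: selecting a triple point on the boundary of an outermost region and smoothing it so as to pinch off a fresh Seifert circle, I would show that each triple point can be resolved so as to raise the circle count by at least one beyond the base configuration ($t=0$, an embedded family of circles with $s\ge 1$), while keeping $F$ connected. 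Verifying that a local smoothing choice truly contributes a \emph{new} global circle, and that connectedness and orientability survive, is the delicate point on which the whole estimate rests.
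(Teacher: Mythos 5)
Your reduction of the theorem to a count of Seifert circles is exactly the paper's strategy, but the reduction is the easy half, and the half you leave open --- the inequality $s\ge t+1$ (equivalently $s'\ge 2t+1$ for the perturbed double-crossing diagram) --- is the entire content of the proof. You explicitly defer it to an unspecified induction on $t$, and that induction has a real obstruction: after you resolve one triple point the object in hand is no longer a triple-crossing diagram of anything to which the inductive hypothesis applies, and ``pinching off a fresh global circle'' at an outermost triple point is not something you can do freely, because once an orientation of $L$ is fixed the Seifert smoothing is determined up to the single binary choice of how to perturb each triple point (the two sides of a Reidemeister~III triangle). The paper closes this gap non-inductively: it checkerboard-colors the $2n+2$ faces of $D$, chooses the \emph{piecewise natural} orientation for which every white region is oriented clockwise (existence via \cite[Lemma 2.1]{AHP19}; this is the step that forces the six ends at each triple point to alternate in--out), notes that WLOG at least $n+1$ faces are white, resolves each triple point by the perturbation that creates one new white face (the inner triangle), and then uses that for the resulting \emph{special} diagram the Seifert circles are precisely the boundaries of the white regions, giving $s'\ge 2n+1$ directly. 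None of these ingredients --- the coloring, the specific orientation, the majority-color trick, or the ``white faces $=$ Seifert circles'' identification --- appears in your proposal, and without the piecewise natural orientation the count can genuinely fail.

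A secondary point: your Euler characteristic $\chi(F)=s-2t$ does not match your own description of $F$ as the Seifert surface of the perturbed diagram, which has $3t$ bands and hence $\chi(F)=s'-3t$. The two bookkeepings agree only if each perturbed triple point contributes exactly one small local Seifert circle, i.e.\ $s'=s+t$; that is true for the resolution the paper exhibits in its Figure~2 under the piecewise natural orientation, but it is an assertion requiring proof, not a definition you may impose. As written, the proposal is a correct framing of the problem together with an acknowledged placeholder where the argument should be.
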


The paper is organized as follows. In Section\;\ref{s2} of this paper we give a lower bound for the triple-crossing number of a given link in terms of the canonical genus (and hence also the genus) of that link. As an application we show in Section\;\ref{s3} that the triple-crossing number of torus $T(p,q)$ knot is equal to $(p-1)(q-1)$ and describe its minimal triple-crossing diagram. In Section\;\ref{s4} we conclude from the bounds the triple-crossing number of additional knots.

\section*{Acknowledgments}

The author would like to thank the anonymous referee who provided useful and detailed comments on a earlier version of the manuscript.

\section{Definitions}\label{s1}

The \emph{projection} of a knot or a link $L\subset \mathbb{R}^3$ is its image under the standard projection $\pi:\mathbb{R}^3\to\mathbb{R}^2$ such that it has only a finite number of self-intersections (one may have to isotope $L$ in other to achieve this), called \emph{multiple points}, and in each multiple point each pair of its strands are transverse. 
\par
If each multiple point of a projection has multiplicity three then we call this projection a \emph{triple-crossing projection}. The \emph{triple-crossing} is a three-strand crossing with the strand labeled $T, M, B$, for top, middle and bottom. 
The \emph{triple-crossing diagram} is a triple-crossing projection such that each of its triple points is a triple-crossing (i.e. it has the labelings at each triple-point), such that $\pi^{-1}$ of the strand labeled $T$ (in the neighborhood of that triple point) is on the top of the strand corresponding to the strand labeled $M$, and the latter strand is on the top of the strand corresponding to the strand labeled $B$ (see Figure\;\ref{r01}).

	\begin{figure}[h!t]
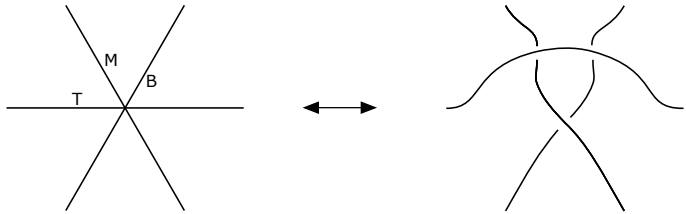

		\centering		
		\begin{lpic}[]{./figs/LM01(9cm)}
		
		\end{lpic}
		\caption{A deconstruction/construction of a triple-crossing.}
		\label{r01}
	\end{figure}

The \emph{triple-crossing number} of a knot or link $L$, denoted $c_3(L)$, is the least number of triple-crossings for any triple-crossing diagram of $L$. The classical double-crossing number invariant we will denote by $c_2$. A \emph{minimal triple-crossing diagram} of a knot or link $L$ is a triple-crossing diagrams of $L$ that has exactly $c_3(L)$ triple-crossings.
\par
A \emph{piecewise natural orientation} (see \cite{AHP19}) on a triple-crossing diagram is an orientation of each component of that link, such that in each crossing the strands are oriented in-out-in-out-in-out, as we encircle the crossing. 
\par
Given any knot or link, one can obtain a \emph{Seifert surface} (i.e. an orientable surface embedded into $\mathbb{R}^3$ whose boundary equals the knot or link) by applying Seifert’s algorithm to an oriented double-crossing of its projection. We split the diagram at each crossing and reglue so that the orientations of the resulting strands match. Then each of the disjoint simple closed curves that result is spanned with a disk. Half-twisted bands are attached to the boundaries of the disk at each crossing. We call a surface obtained in this manner a \emph{canonical Seifert surface} for the knot or link.

\section{Canonical genus and triple-crossing number of a knot}\label{s2}

The \emph{canonical genus} of a link $L$, denoted here by $g_c(L)$, is the least genus of any canonical Seifert surface for $L$. The \emph{genus} of a link $L$, denoted by $g(L)$, is the least genus of any Seifert surface for $L$. This definition and Theorem\;\ref{t1} immediately imply the lower bound of the triple-crossing number $c_3$ by replacing $g_c$ with $g$ in the right-hand side of the inequality, since for any link $L$ we have $g_c(L)\geq g(L)$.

	\begin{figure}[h!t]
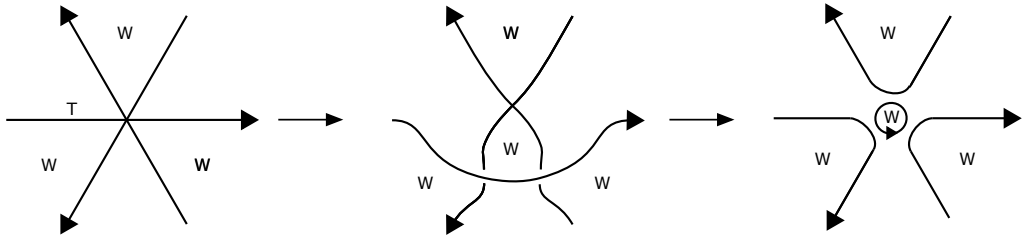

	\centering
	
	\begin{lpic}[]{./figs/LM02(13.5cm)}
		
	\end{lpic}
	\caption{Deconstructing a triple-crossing and performing the resolution.}
	\label{r04}
\end{figure}

\begin{proof}[Proof of Theorem\;\ref{t1}]
	Let $L$ be any (unoriented) knot or link and let $D$ be a minimal triple-crossing diagram of $L$, and let $n$ be the number of the set of triple-crossings of $D$ (denoted $vertices(D)$). The set of edges of $D$, denoted $edges(D)$, contains $3n$ elements. For a planar diagram we know, from Euler characteristic, that the number of elements in $faces(D)$ (consisting of \emph{regions} defined as connected components of the planar complement of $D$) equals $\# faces(D)=2+\# edges-\# vertices=2+3n-n=2n+2$. We can color elements of $faces(D)$ in a checkerboard fashion (see \cite[Lemma 2.1.]{AHP19}). We now pick an orientation of $L$ that corresponds to the piecewise natural orientation of $D$ induced by the checkerboard coloring of regions of $D$, such that the boundary of each white region is oriented clockwise (i.e. the piecewise natural orientation with respect to this checkerboard coloring). Without loss of generality assume that at least half of elements from $faces(D)$ are colored white and are elements of the set $whitefaces(D)$, i.e. $\# whitefaces(D)\geq \frac{1}{2}(2n+2)=n+1.$
	\par
	We now deconstruct every triple-crossing to a three double-crossings in a way such that we (locally) increase the number of white faces by one (see Figure\;\ref{r04} where the white regions are marked by the letter $W$, and the flat crossing can have arbitrary crossing information). This figure covers all cases since, the diagram $D$ has the piecewise natural orientation. In the end we obtain a double-crossing diagram $\overline{D}$ such that $vertices(\overline{D})=3n$, and $\# white faces(\overline{D})\geq (n+1)+n= 2n+1$.
	\par
	We can prove that $\# whitefaces(\overline{D})=\#\text{Seifert-circles}(\overline{D})$. To show this notice that this checkerboard coloring of $\overline{D}$ is induced from a checkerboard coloring of $D$, and that the orientation of $D$ is the piecewise natural one with respect to this checkerboard coloring. The Seifert algorithm performed on $\overline{D}$ gives us a canonical Seifert surface $F$ of genus $\displaystyle g(F)=\frac{1}{2}(2+\#vertices(\overline{D})-\#\text{Seifert-circles}(\overline{D})-r(L))\leq\frac{1}{2}(2+3n-(2n+1)-r(L))=\frac{n-r(L)+1}{2}$,
	 where $r(L)$ is the number of components of $L$. 
	 \par
	 Hence, $c_3(L)=n\geq 2\cdot g(F)+r(L)-1\geq 2\cdot g_c(L)+r(L)-1$.	
\end{proof}

\section{Torus knots and their minimal triple-crossing diagrams}\label{s3}

Torus knots, denoted $T(p,q)$ for coprime positive integers $p<q$ are known to have their minimal double-crossing diagrams as a closure of the braid word $(\sigma_1\sigma_2\cdots\sigma_{p-1})^q$ (see \cite{Mur91}). In this infinite family of knots, up to now, the triple-crossing number of nontrivial torus knots has been known only for the trefoil and the cinquefoil.

\begin{proposition}
	Let $K$ be a torus $T(p,q)$ knot (for coprime positive integers $p,q$). Then $c_3(K)=(p-1)(q-1)$.
\end{proposition}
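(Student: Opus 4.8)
The plan is to prove the equality $c_3(T(p,q)) = (p-1)(q-1)$ by establishing matching lower and upper bounds. For the lower bound I would invoke Theorem~\ref{t1}, which gives $c_3(K) \geq 2 g_c(K) + r(K) - 1$. Since $T(p,q)$ is a knot we have $r(K) = 1$, so this reduces to $c_3(K) \geq 2 g_c(K)$. The genus of a torus knot is classically known to be $g(T(p,q)) = \tfrac{1}{2}(p-1)(q-1)$, so if I can argue that the canonical genus equals the genus here, the lower bound $c_3(K) \geq (p-1)(q-1)$ follows immediately. In fact the standard braid diagram as the closure of $(\sigma_1 \sigma_2 \cdots \sigma_{p-1})^q$ realizes a Seifert surface whose genus is exactly $\tfrac{1}{2}(p-1)(q-1)$: applying Seifert's algorithm to this positive braid closure yields $p$ Seifert circles and $(p-1)q$ crossings, giving genus $\tfrac{1}{2}(2 - p + (p-1)q - 1) = \tfrac{1}{2}(p-1)(q-1)$. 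This is a canonical Seifert surface, so $g_c(K) \leq \tfrac{1}{2}(p-1)(q-1)$, and combined with $g_c \geq g = \tfrac{1}{2}(p-1)(q-1)$ we get equality of the canonical and ordinary genus.

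For the upper bound I would exhibit an explicit triple-crossing diagram of $T(p,q)$ with exactly $(p-1)(q-1)$ triple-crossings. The natural starting point is the minimal double-crossing braid diagram, which has $(p-1)q$ double-crossings. The idea is to merge double-crossings into triple-crossings: recall from the construction in Figure~\ref{r01} (and the deconstruction used in the proof of Theorem~\ref{t1}) that a single triple-crossing carries the information of three appropriately arranged double-crossings, but more economically one can also combine pairs of adjacent double-crossings along a strand into triple-crossings. I would look at the braid picture of $(\sigma_1 \cdots \sigma_{p-1})^q$ and group the $(p-1)q$ crossings so that the resulting count drops to $(p-1)(q-1)$. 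The arithmetic to target is a reduction from $(p-1)q$ down to $(p-1)(q-1) = (p-1)q - (p-1)$, i.e. a saving of exactly $(p-1)$ crossings, which strongly suggests combining crossings in a way that removes one double-crossing per braid strand level (there are $p-1$ generators).

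The main obstacle, and the part requiring genuine care, will be constructing the explicit minimal triple-crossing diagram and verifying it indeed represents $T(p,q)$ with the claimed crossing count and correct over/under data. The combinatorics of the torus braid must be reorganized so that triple-points appear geometrically, which means isotoping the closed braid so that triples of strands genuinely meet transversely at common points while preserving the knot type. I expect the cleanest route is an inductive or periodic construction exploiting the $q$-fold rotational symmetry of the torus braid, arranging the $q$ copies of $\sigma_1 \cdots \sigma_{p-1}$ so that consecutive blocks share triple-crossings. I would verify the construction by an explicit small case (such as recovering the known trefoil value $c_3(T(2,3)) = 2$ and cinquefoil $c_3(T(2,5)) = 4$) before giving the general pattern, and confirm the diagram has no extraneous crossings so the count is exactly $(p-1)(q-1)$. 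Once both bounds agree, the equality and the description of the minimal diagram follow, completing the proof.
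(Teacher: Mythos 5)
Your lower bound is correct and matches the paper's: Theorem~\ref{t1} with $r(K)=1$ gives $c_3(K)\geq 2g_c(K)\geq 2g(K)=(p-1)(q-1)$. Note, though, that your detour through proving $g_c(T(p,q))=g(T(p,q))$ is unnecessary: since $g_c\geq g$ holds for every link, the chain $c_3\geq 2g_c\geq 2g$ already yields the bound without any claim about the canonical genus being minimal (your Seifert-circle computation for the positive braid closure is correct, but it is not needed here).

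The upper bound, however, is a genuine gap. You correctly identify the target arithmetic --- turning the $(p-1)q$ double-crossings of the closure of $(\sigma_1\cdots\sigma_{p-1})^q$ into $(p-1)(q-1)$ triple-crossings, a saving of $p-1$ --- but you never actually produce the diagram; you explicitly defer ``constructing the explicit minimal triple-crossing diagram'' as the part requiring genuine care, and your sketch of ``combining pairs of adjacent double-crossings'' does not by itself yield a valid triple-crossing diagram of the same knot type (note also that a triple-crossing resolves into \emph{three} double-crossings, so naive merging does not give the right count). The missing ingredient is the \emph{folding operation} of Adams \cite{Ada13}: one places $p-1$ circles, one between each pair of neighboring strands of the closed braid, so that together they cover all crossings; folding along the circle between strands $i$ and $i+1$ converts the $q$ crossings of type $\sigma_i$ into $q-1$ triple-crossings, giving $(p-1)q-(p-1)=(p-1)(q-1)$ in total. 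This is exactly what the paper does (and is packaged as the bound $c_3(K)\leq k-p+1$ for closures of positive braid words of length $k$ on $p$ strands, cf.\ the second proposition of Section~\ref{s3}). Without invoking this construction or supplying an equivalent explicit one, your proof of the upper bound is only a plan, not an argument.
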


\begin{proof}
	It is well-known (see \cite{BurZie03}) that the genus of the torus knot $T(p,q)$ equals $g(T(p,q))=\frac{1}{2}(p-1)(q-1)$. From Theorem\;\ref{t1} we have now that $c_3(T(p,q))\geq 2\cdot g_c(T(p,q))\geq 2\cdot g(T(p,q))=(p-1)(q-1)$. To finish the proof it suffices to find a triple-crossing diagram of $T(p,q)$ with $(p-1)(q-1)$ triple-crossings. Assuming $p<q$, we do this by first putting $p-1$ circles covering all double-crossings, one between each neighboring strands of the closure of $(\sigma_1\sigma_2\cdots\sigma_{p-1})^q$ braid word. Then performing the folding operation on each circle (described in \cite{Ada13}), obtaining triple-crossing diagram of $T(p,q)$ knot with $(p-1)q-(p-1)=(p-1)(q-1)$ triple-crossings. 
\end{proof}

\begin{remark}
	By analogy, the same statement as in the above proposition holds for the mirror image of those knots and their diagrams. 
\end{remark}

The above proposition can be proven at the following level of generality.

\begin{proposition}
Suppose that $K$ is a knot that may be written as the closure of a braid $\beta$ on $p$ strands, such that $\beta \in B_p$ may be written as a word of length $k$ in the standard generators of $B_p$ (in particular this means that $\beta$ is a positive braid). Then we have
$c_3(K) = k - p + 1.$
\end{proposition}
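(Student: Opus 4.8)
The plan is to prove the two inequalities $c_3(K)\geq k-p+1$ and $c_3(K)\leq k-p+1$ separately, mirroring the structure of the preceding proposition on torus knots.

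For the lower bound I would first compute the genus of $K$. Applying Seifert's algorithm to the closed positive braid diagram of $\beta$, the $p$ braid strands become $p$ Seifert circles (hence $p$ disks) and each of the $k$ letters of the word becomes a single band, so the resulting canonical Seifert surface $F$ has Euler characteristic $\chi(F)=p-k$. Since $K$ is a knot, the permutation underlying $\beta$ is a $p$-cycle, which forces every generator $\sigma_1,\dots,\sigma_{p-1}$ to occur in the word; hence the graph whose edges are the bands joining the $p$ disks is connected, and $F$ is connected with a single boundary component. Therefore $g(F)=\tfrac12(1-\chi(F))=\tfrac12(k-p+1)$. Because $\beta$ is a \emph{positive} braid, its closure is fibered with fiber $F$ (a theorem of Stallings), equivalently the Bennequin inequality is sharp, so $F$ realizes the genus of $K$ and $g(K)=\tfrac12(k-p+1)$. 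Theorem~\ref{t1} with $r(K)=1$ then yields $c_3(K)\geq 2\,g_c(K)\geq 2\,g(K)=k-p+1$.

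For the upper bound I would exhibit a triple-crossing diagram with exactly $k-p+1$ crossings by the same folding construction used for torus knots. Place $p-1$ circles in the diagram of the closed braid, one in each strip between neighboring strand positions $i$ and $i+1$, so that the circle in strip $i$ covers all $k_i$ crossings coming from the generator $\sigma_i$, where $\sum_{i=1}^{p-1}k_i=k$. Performing Adams's folding operation (described in \cite{Ada13}) on each circle converts the double-crossings it covers into triple-crossings while absorbing one crossing in that strip, contributing $k_i-1$ triple-crossings. Summing over all strips gives $\sum_{i=1}^{p-1}(k_i-1)=k-(p-1)=k-p+1$ triple-crossings, matching the lower bound.

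The step I expect to require the most care is this folding construction in full generality. In the torus case every strip carries the same number $q$ of crossings arranged symmetrically, whereas for an arbitrary positive word the crossings of $\sigma_i$ may be interspersed with those of $\sigma_{i\pm1}$, and some strips may carry only a single crossing, where the count $k_i-1$ is $0$. I would therefore need to verify that the folding operation still applies strip by strip, that the turnaround of each fold can be isotoped so as to remove exactly one crossing per circle, and that the resulting object is a genuine triple-crossing diagram representing $K$ rather than another link. The positivity of $\beta$ should be precisely what guarantees that all crossings covered by a given circle share the same sign, so that the piecewise natural orientation is consistent and each fold produces legitimate triple-crossings; pinning this down is the crux of the argument.
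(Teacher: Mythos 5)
Your proposal is correct and follows essentially the same route as the paper: the paper obtains the upper bound by citing Adams's Theorem 2.2 (which is exactly the folding construction you describe) and the lower bound from Theorem~\ref{t1} together with the fact that Seifert's algorithm on a positive braid closure gives a minimal genus surface (citing Cromwell rather than Stallings/Bennequin, but this is the same standard fact). You have simply filled in the details that the paper delegates to references.
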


\begin{proof}
The upper bound on $c_3(K)$ comes from \cite[Theorem 2.2.]{Ada13}, and the lower bound on $c_3(K)$ comes from Theorem\;\ref{t1}, together with the fact that Seifert's algorithm, applied to a $2$-crossing diagram obtained by taking the closure of a positive braid, yields a Seifert surface of minimal genus (see \cite{Cro89}).
\end{proof}

\section{Other knots}\label{s4}

We have the following upper bound on triple-crossing number.

\begin{lemma}[\cite{Ada13}]\label{l2}
	Let $K$ be a nontrivial knot or a nontrivial link. If $K\not=T(2,n)$ for any $n\in\mathbb{Z}$, then $ c_3(K)\leq c_2(K)-2.$	
\end{lemma}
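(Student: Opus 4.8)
The plan is to start from a minimal double-crossing diagram $D$ of $K$, so that $D$ has exactly $n = c_2(K)$ crossings, and to convert $D$ into a triple-crossing diagram with at most $n-2$ triple-crossings by repeated use of the folding operation of \cite{Ada13}. Recall that a single fold takes a family of double-crossings that can be simultaneously covered by one folded arc (for instance a whole twist region, or the $q$ crossings captured by one of the circles in the proof of the torus-knot proposition) and replaces those $m$ crossings by $m-1$ triple-crossings, a net saving of exactly one crossing. Thus the whole problem reduces to locating, inside some minimal diagram of $K$, two essentially disjoint foldable families, since folding each of them independently produces a triple-crossing diagram with $n-2$ triple-crossings.

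First I would record the weaker bound $c_3(K) \le c_2(K)-1$, which follows from a single fold applied to any foldable family present in $D$; at least one such family is always available. The case $K = T(2,n)$ is precisely where this is the best one can do: its minimal diagram is a single twist region on two strands, all of whose $n$ crossings form one foldable family, and folding it yields $n-1$ triple-crossings, matching $c_3(T(2,n)) = n-1$ from the proposition. This pinpoints why $T(2,n)$ must be excluded, as it is exactly the family whose minimal diagram carries only one fold.

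To gain the second saved crossing I would argue that any nontrivial $K$ other than $T(2,n)$ admits a minimal (or suitably chosen) diagram whose crossings split into at least two independent foldable families. The cleanest model is the positive-braid case already visible in the torus proposition: a closure of a braid on $p$ strands is covered by $p-1$ folding circles, one between each pair of neighbouring Seifert strands, and folding all of them gives $c_3(K) \le c_2(K) - (p-1) \le c_2(K) - 2$ as soon as $p \ge 3$. In the general case the two folds are found directly in the planar diagram, for example at two distinct twist regions, or at one twist region together with a separate foldable arc, and one checks that folding one family does not destroy the other.

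The hard part will be exactly this structural claim: showing that the single-foldable-family situation occurs only for $K = T(2,n)$. This amounts to proving that a reduced minimal diagram of any other knot or link cannot consist of a single twist region on two strands, and then verifying that the two folds can always be realized disjointly. I expect this step to require a case analysis on the minimal diagram, separating out the two-Seifert-circle diagrams, handling connected sums, and treating multi-component links, where one must also keep track of the component count $r(K)$ so that the piecewise-natural orientation underlying the folds stays consistent. Once two disjoint folds are in hand the count $n \mapsto n-2$ is immediate and the lemma follows.
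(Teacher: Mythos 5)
The paper does not actually prove this lemma; it imports it from \cite{Ada13}, and your strategy --- fold along crossing covering circles, each turning $m$ double crossings into $m-1$ triple crossings, and locate at least two such circles in a minimal diagram --- is indeed the strategy of that reference and the same folding device the paper uses for torus knots. But as written your proposal is an outline rather than a proof: the two claims that carry all of the content are exactly the ones you defer. First, you assert that ``at least one such family is always available''; this is itself a nontrivial lemma (the existence of a crossing covering collection, i.e.\ disjoint circles meeting the diagram only at crossings and jointly covering all of them), and in a reduced diagram one must also check that each such circle passes through at least two crossings, since folding a circle through a single crossing would simply delete that crossing. Second, and more seriously, the claim that every nontrivial $K\neq T(2,n)$ admits a minimal diagram carrying two disjoint foldable circles \emph{is} the lemma being proved, and you offer nothing for it beyond ``I would argue'' and the announcement of a case analysis. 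Your proposed sources of the two folds are also inadequate in general: two twist regions of size at least two need not exist in a minimal diagram (a foldable circle typically passes through crossings that are far apart, as in the braid picture you cite), and the positive-braid count $c_3(K)\leq c_2(K)-(p-1)$ only applies when a minimal-crossing diagram of $K$ happens to be a braid closure on $p\geq 3$ strands, which you cannot assume.

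So the gap is concrete: you have correctly reduced the lemma to the statement that every reduced minimal diagram of a nontrivial link other than $T(2,n)$ admits a crossing covering collection with at least two circles, but that reduction is the easy half. The statement itself --- where the exclusion of $T(2,n)$ and the minimality of the diagram actually enter --- is left unproven, so the proposal does not yet establish the bound $c_3(K)\leq c_2(K)-2$.
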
	

\begin{remark}\label{rem:01}

From Lemma\;\ref{l2}, Theorem\;\ref{t1} and the table of knot genus from \cite{LivMoo20} we obtain, because of equality of upper- and lower-bounds, exact values of (up to now unknown) triple-crossing numbers for many more knots, such as:\\ $8_{2}$, $8_{5}$, $8_{7}$, $8_{9}$, $8_{10}$, $8_{16}$, $8_{17}$, $8_{18}$, $10_{2}$, $10_{5}$, $10_{9}$, $10_{17}$, $10_{46}$, $10_{47}$, $10_{48}$, $10_{62}$, $10_{64}$, $10_{79}$, $10_{82}$, $10_{85}$, $10_{91}$, $10_{94}$, $10_{99}$, $10_{100}$, $10_{104}$, $10_{106}$, $10_{109}$, $10_{112}$, $10_{116}$, $10_{118}$, $10_{123}$, $10_{139}$, $10_{152}$, $K12a_{146}$, $K12a_{369}$, $K12a_{576}$, $K12a_{716}$, $K12a_{722}$, $K12a_{805}$, $K12a_{815}$, $K12a_{819}$, $K12a_{824}$, $K12a_{835}$, $K12a_{838}$, $K12a_{850}$, $K12a_{859}$, $K12a_{864}$, $K12a_{869}$, $K12a_{878}$, $K12a_{898}$, $K12a_{909}$, $K12a_{916}$, $K12a_{920}$, $K12a_{981}$, $K12a_{984}$, $K12a_{999}$, $K12a_{1002}$, $K12a_{1011}$, $K12a_{1013}$, $K12a_{1027}$, $K12a_{1047}$, $K12a_{1051}$, $K12a_{1114}$, $K12a_{1120}$, $K12a_{1128}$, $K12a_{1134}$, $K12a_{1168}$, $K12a_{1176}$, $K12a_{1191}$, $K12a_{1199}$, $K12a_{1203}$, $K12a_{1209}$, $K12a_{1210}$, $K12a_{1211}$, $K12a_{1212}$, $K12a_{1214}$, $K12a_{1215}$, $K12a_{1218}$, $K12a_{1219}$, $K12a_{1220}$, $K12a_{1221}$, $K12a_{1222}$, $K12a_{1223}$, $K12a_{1225}$, $K12a_{1226}$, $K12a_{1227}$, $K12a_{1229}$, $K12a_{1230}$, $K12a_{1231}$, $K12a_{1233}$, $K12a_{1235}$, $K12a_{1238}$, $K12a_{1246}$, $K12a_{1248}$, $K12a_{1249}$, $K12a_{1250}$, $K12a_{1253}$, $K12a_{1254}$, $K12a_{1255}$, $K12a_{1258}$, $K12a_{1260}$, $K12a_{1273}$, $K12a_{1283}$, $K12a_{1288}$, $K12n_{242}$, $K12n_{472}$, $K12n_{574}$, $K12n_{679}$, $K12n_{688}$, $K12n_{725}$, $K12n_{888}$.
\end{remark}

\begin{corollary}
	Let $K_1$ and $K_2$ be each either torus knot or a knot from Remark\;\ref{rem:01}. Then $c_3(K_1)+c_3(K_2)=c_3(K_1\#K_2)$. 
\end{corollary}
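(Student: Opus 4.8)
The plan is to prove matching lower and upper bounds for $c_3(K_1\#K_2)$, the whole argument resting on the fact that each knot under consideration has triple-crossing number equal to exactly twice its genus. First I would record that $c_3(K)=2g(K)$ for every $K$ that is a torus knot or a knot from Remark\;\ref{rem:01}. For a torus knot this is immediate from the proposition computing $c_3(T(p,q))$, since $c_3(T(p,q))=(p-1)(q-1)=2g(T(p,q))$. For a knot $K$ of Remark\;\ref{rem:01}, the lower bound $c_3(K)\geq 2g(K)$ coming from Theorem\;\ref{t1} coincides with the upper bound $c_3(K)\leq c_2(K)-2$ of Lemma\;\ref{l2}; this forces all the intermediate inequalities to collapse to equalities, so in particular $c_3(K)=2g(K)$.

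For the lower bound on the connected sum I would apply Theorem\;\ref{t1} to the knot $K_1\#K_2$ (which has $r=1$, as it is again a knot), obtaining $c_3(K_1\#K_2)\geq 2g(K_1\#K_2)$. I would then invoke the classical additivity of the Seifert genus under connected sum, namely $g(K_1\#K_2)=g(K_1)+g(K_2)$ (Schubert). Combining this with the first step yields $c_3(K_1\#K_2)\geq 2g(K_1)+2g(K_2)=c_3(K_1)+c_3(K_2)$.

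For the upper bound I would use the subadditivity of the triple-crossing number under connected sum. Taking minimal triple-crossing diagrams $D_1$ of $K_1$ and $D_2$ of $K_2$ and placing them in disjoint regions of the plane, one forms a triple-crossing diagram of $K_1\#K_2$ by joining an outermost arc of $D_1$ to an outermost arc of $D_2$ without introducing any new crossing. The resulting diagram has $c_3(K_1)+c_3(K_2)$ triple-crossings, so $c_3(K_1\#K_2)\leq c_3(K_1)+c_3(K_2)$. Combining the two inequalities gives the claimed equality.

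The computations here are routine; the genuine content is the observation that these particular knots satisfy $c_3=2g$, which is precisely what allows the (hard, classical) additivity of the genus to propagate to additivity of the triple-crossing number. The one point that I expect to require explicit care is the subadditivity of $c_3$ under connected sum, i.e.\ verifying that the connected-sum diagram can indeed be assembled without creating extra crossings; this is straightforward for planar diagrams but should be stated rather than taken for granted, since without it the upper bound is not available.
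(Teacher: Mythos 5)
Your proposal is correct and follows essentially the same route as the paper: the lower bound from Theorem\;\ref{t1} combined with Schubert's additivity of the genus, the upper bound from subadditivity of $c_3$ under connected sum, and the key fact that $c_3=2g$ for each knot in question. You merely spell out two steps the paper leaves implicit (the diagrammatic subadditivity of $c_3$ and the verification that $c_3(K)=2g(K)$ for the knots of Remark\;\ref{rem:01}), which is a reasonable amount of extra care.
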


\begin{proof}
	From Theorem\;\ref{t1} and the additivity of the genus (in respect to the connected sum $\#$) we have: $c_3(K_1)+c_3(K_2)\geq c_3(K_1\#K_2)\geq 2\cdot g(K_1\#K_2)=2\cdot g(K_1)+2\cdot g(K_2)=c_3(K_1)+c_3(K_2).$
\end{proof}

\end{document}